\title[]{Total Cartier index of a bounded family}
\date{\today}
\author{Jingjun Han}
\address{\rm Shanghai Center for Mathematical Sciences, Fudan University, Shanghai 200438, China}
\email{hanjingjun@fudan.edu.cn}
\author{Chen Jiang}
\address{\rm Shanghai Center for Mathematical Sciences \& School of Mathematical Sciences, Fudan University, Shanghai 200438, China}
\email{chenjiang@fudan.edu.cn}
\newcommand{\Rr}{\mathbb{R}}
\newcommand{\Supp}{\operatorname{Supp}}
\newcommand{\Qq}{\mathbb{Q}}
\newcommand{\Ii}{\Gamma}
\newtheorem{thm}{Theorem}[section]
\newtheorem{lem}[thm]{Lemma}
\newtheorem{cor}[thm]{Corollary}
\newtheorem{Prbm}[thm]{Problem}
\theoremstyle{definition}
\newtheorem{defn}[thm]{Definition}
\newtheorem{exmp}[thm]{Example}
\theoremstyle{remark}
\begin{document}
\begin{abstract} 
We prove that the total Cartier index of a bounded family of projective varieties of klt type is bounded.
\end{abstract}
\maketitle

%%%%%%%%%
%\pagestyle{myheadings}
%\markboth{\hfill \hfill}{\hfill \hfill}
\numberwithin{equation}{section}
%%%%%%%%%%%%
%\tableofcontents

 \section{Introduction}
 Throughout this paper, we work over an algebraically closed field of characteristic $0$, for instance, the complex number field $\mathbb{C}$. All the varieties are quasi-projective and normal unless stated otherwise.

 \begin{defn}
 Let $X$ be a variety.
     The {\it total Cartier index} of $X$ is defined to be the minimal positive integer $k$ such that for any Weil $\mathbb{Q}$-Cartier divisor $D$ on $X$, $kD$ is Cartier. It is defined to be $\infty$ if such $k$ does not exists. 
 \end{defn}

 Recall that a variety $X$ is said to be {\it of klt type} if there exists an effective $\mathbb{R}$-divisor $B$ such that  $(X, B)$ is klt.

The following is the main result of this paper. 
 
\begin{thm}\label{thm:boundedness of global index}
    Let $\mathcal{P}$ be a bounded family of projective varieties of klt type. Then there exists a positive integer $N$ depending only on $\mathcal{P}$ such that the total Cartier index of $X$ is bounded from above by $N$ for any $X\in \mathcal{P}$.
    %and any Weil $\mathbb{Q}$-Cartier divisor $D$ on $X$, $ND$ is Cartier. 
\end{thm}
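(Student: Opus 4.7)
The plan is to pass to a simultaneous log resolution of a representing family of $\mathcal{P}$, and then bound the torsion of $\mathrm{Cl}(X_t)/\Pic(X_t)$ uniformly via the relative Picard scheme of the resolution.

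By boundedness, $\mathcal{P}$ is parametrized by the closed fibers of a projective morphism $\pi\colon \mathcal{X} \to T$ with $T$ of finite type. Noetherian induction on $T$ reduces the problem to producing a uniform bound on a dense open stratum on each irreducible component of $T$; after shrinking $T$, I assume $T$ is smooth irreducible and $\pi$ is flat. Using the klt type property on fibers together with spreading-out arguments for klt pairs, after a further shrinking, I produce an effective $\mathbb{Q}$-divisor $\mathcal{B}$ on $\mathcal{X}$ with $(\mathcal{X}, \mathcal{B})$ klt over $T$ and each $(X_t, \mathcal{B}|_{X_t})$ klt. Applying Hironaka's resolution to $(\mathcal{X}, \mathcal{B})$ and shrinking $T$ once more, I obtain a projective log resolution $\rho\colon \widetilde{\mathcal{X}} \to \mathcal{X}$ with $\widetilde{\mathcal{X}} \to T$ smooth and each $\rho_t\colon \widetilde{X}_t \to X_t$ a log resolution of $(X_t, B_t)$.

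On the smooth projective family $\widetilde{\mathcal{X}} \to T$, the relative Picard scheme is locally of finite type, and after further stratification of $T$ the N\'eron--Severi groups $\mathrm{NS}(\widetilde{X}_t)$ have uniformly bounded rank and torsion. For each $t$, pushforward identifies $\mathrm{Cl}(X_t) \cong \Pic(\widetilde{X}_t)/\langle E_i(t)\rangle$, where $\{E_i(t)\}$ are the $\rho_t$-exceptional prime divisors; moreover $\Pic(X_t) \hookrightarrow \mathrm{Cl}(X_t)$ is identified with $\rho_t^*\Pic(X_t)$, namely the subgroup of classes $L \in \Pic(\widetilde{X}_t)$ satisfying $L \cdot C = 0$ for every $\rho_t$-contracted curve $C$ (this characterization uses that klt singularities are rational). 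The total Cartier index of $X_t$ equals the exponent of the torsion subgroup of the finitely generated group $\mathrm{Cl}(X_t)/\Pic(X_t) = \Pic(\widetilde{X}_t)/(\rho_t^*\Pic(X_t) + \langle E_i(t)\rangle)$, which is uniformly bounded because the number of exceptional classes $\{E_i(t)\}$ is uniformly bounded, the rank of the subgroup of $N_1(\widetilde{X}_t)$ generated by contracted curve classes is uniformly bounded, and $\mathrm{NS}(\widetilde{X}_t)$ has uniformly bounded torsion.

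The principal obstacle is constructing the relative klt boundary $\mathcal{B}$ on $\mathcal{X}$ compatible with the fiberwise klt structure, which requires combining the klt type property on fibers with constructibility results for klt pairs (or Koll\'ar-type theorems on families of pairs). A secondary subtlety is that a log resolution of the total family restricts fiberwise to log resolutions only over a dense open in $T$; the complementary locus is handled by Noetherian induction. A possible alternative route would be to replace the log resolution with a simultaneous small $\mathbb{Q}$-factorialization via relative MMP and reduce to the $\mathbb{Q}$-factorial case, but the Picard-scheme-of-a-resolution approach above has the advantage of exploiting the well-behavedness of smooth projective families directly.
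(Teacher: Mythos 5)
Your approach is genuinely different from the paper's: the paper first replaces each $X\in\mathcal{P}$ by a small modification $Y$ with bounded Gorenstein index (via a relative MMP on a simultaneous log resolution, Lemma~\ref{lem: Q-factorial bounded}), transfers the total Cartier index along small morphisms (Lemma~\ref{lem: tci=q-factorial}), and then concludes by citing the already-available boundedness theorem for $\epsilon$-lc bounded families \cite[Theorem~1.10]{HLQ23}. You instead pass to a simultaneous smooth log resolution and try to bound the torsion exponent of $\Pic(\widetilde X_t)/(\rho_t^*\Pic(X_t)+\langle E_i(t)\rangle)$ directly via the relative Picard scheme. The overall identification $\mathrm{Cl}(X_t)/\Pic(X_t)\cong\Pic(\widetilde X_t)/(\rho_t^*\Pic(X_t)+\langle E_i(t)\rangle)$ and the fact that its torsion exponent is the total Cartier index are correct, and the strategy is interesting; however, two steps are incomplete as written.

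First, the characterization of $\rho_t^*\Pic(X_t)$ as $\{L\in\Pic(\widetilde X_t): L\cdot C=0 \text{ for all } \rho_t\text{-contracted curves }C\}$ is \emph{not} a consequence of rational singularities alone. For an arbitrary resolution $\rho$, a $\rho$-numerically trivial line bundle need not descend from first principles; what one actually has to do is run a $(K_{\widetilde X_t}+E)$-MMP over $X_t$ (BCHM) terminating in a small $\mathbb{Q}$-factorialization, show that this last small morphism is the contraction of an extremal face for a suitable boundary, and invoke the cone theorem \cite[Theorem~3.7(4)]{KM98} at each step to descend $L$. This is precisely the technology deployed in Lemmas~\ref{lem: semiample model is small}, \ref{lem: gmm=q-factoril}, and \ref{lem: tci=q-factorial} of the paper. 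So the ingredient you are implicitly using is the same MMP argument the paper makes explicit; it cannot be shortcut to ``klt is rational.''

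Second, the final bound on the torsion exponent does not follow from the three items you list (bounded number of exceptional classes, bounded rank of the span of contracted curve classes, bounded torsion of $\mathrm{NS}(\widetilde X_t)$). A quotient such as $\mathbb{Z}^2/\langle(n,0),(0,1)\rangle\cong\mathbb{Z}/n$ has bounded rank data for every $n$, so rank and cardinality bounds alone never control the torsion exponent. What actually gives the bound is that, after stratifying $T$, the relevant lattice data — the image of the exceptional classes in $\mathrm{NS}$, the sublattice $\{L:L\cdot C=0\}$, and hence the finite quotient itself — is \emph{locally constant} on each stratum, so that Noetherian induction yields a uniform bound. Your phrasing invokes stratification only for $\mathrm{NS}(\widetilde X_t)$; the argument needs the constancy of the entire configuration, and without it the conclusion ``uniformly bounded'' does not follow. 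With these two points repaired (and the construction of the relative klt boundary $\mathcal{B}$ fleshed out, which is another genuine step), your route could plausibly be made to work, but at that point it relies on the same MMP inputs as the paper while additionally invoking the Picard-scheme machinery, so it is not clearly more economical.
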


Theorem \ref{thm:boundedness of global index} completely solves a folklore question on the boundedness of total Cartier index in a bounded family (see \cite[Question 3.31]{HLS19}). We refer the readers to \cite[Theorem 1.10]{GKP16}, \cite[Lemma 2.24]{Bir19}, \cite[Lemma 7.14]{CH20},  \cite[Lemma 3.16]{Bir22}, and \cite[Theorem 1.10]{HLQ23} for some partial results. Our approach to Theorem \ref{thm:boundedness of global index} is based on \cite[Theorem 1.10]{HLQ23} and the following key lemma.

%One of key ingredients in the proof of Theorem \ref{thm:boundedness of global index} is the following lemma. \cite[Theorem 1.2]{Bir18}, 

\begin{lem}\label{lem: Q-factorial bounded}
    Let $\mathcal{P}$ be a bounded family of projective varieties of klt type. Then 
    there exists a positive integer $m$ and a bounded family  
    $\mathcal{P}_\mathbb{Q}$  depending only on $\mathcal{P}$, such that for any $X\in \mathcal{P}$, there exists a variety $Y\in \mathcal{P}_\mathbb{Q}$ with the following properties:
    \begin{enumerate}
        \item there is a small projective birational morphism $Y\to X$; 
        \item $mK_Y$ is Cartier;
        \item $Y$ is $\frac{1}{m}$-lc.
    \end{enumerate}
    %a $\mathbb{Q}$-factorialization $Y$ of $X$ in $\mathcal{P}_\mathbb{Q}$ and $mK_Y$ is Cartier. 
    % there exists a positive integer $N$ depending only on $\mathcal{P}$ such that the total Cartier index of $X$ is bounded from above by $N$ for any $X\in \mathcal{P}$.
    %and any Weil $\mathbb{Q}$-Cartier divisor $D$ on $X$, $ND$ is Cartier. 
\end{lem}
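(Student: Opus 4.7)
The plan is to build $Y$ simultaneously over a parameter space as a family of small $\mathbb{Q}$-factorializations, and then derive (2) and (3) from standard facts about bounded families of $\mathbb{Q}$-factorial klt varieties.

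First, by boundedness we may realize the members of $\mathcal{P}$ as the fibers of some projective morphism $\pi\colon\mathcal{X}\to T$ with $T$ of finite type. By Noetherian induction on $\dim T$, it suffices to carry out the construction over a dense open $T^{\circ}\subset T$ of each irreducible component of $T$; the complement has strictly smaller dimension and is handled by the inductive hypothesis. After shrinking $T^{\circ}$ we may equip $\mathcal{X}|_{T^{\circ}}$ with an effective $\mathbb{Q}$-divisor $\mathcal{B}$ such that $(\mathcal{X}_t,\mathcal{B}_t)$ is klt for every $t\in T^{\circ}$: choose a klt boundary on the generic fiber (which exists because the generic fiber is of klt type), spread it out, and invoke openness of klt-ness in families.

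After a further shrinking of $T^{\circ}$, pick a log resolution $f\colon\mathcal{W}\to\mathcal{X}$ of $(\mathcal{X},\mathcal{B})$ restricting to a log resolution on every fiber. Let $\mathcal{E}_1,\dots,\mathcal{E}_r$ be the prime $f$-exceptional divisors; by constructibility, after one more shrinking the log discrepancies $a_i=a(\mathcal{E}_{i,t};\mathcal{X}_t,\mathcal{B}_t)\in(0,1]$ may be assumed independent of $t\in T^{\circ}$. For a sufficiently small rational $\varepsilon>0$ set
\[
\mathcal{D}=f^{-1}_{*}\mathcal{B}+\sum_{i=1}^{r}(1-a_i+\varepsilon)\mathcal{E}_i,
\]
so that $(\mathcal{W},\mathcal{D})$ is klt with $K_{\mathcal{W}}+\mathcal{D}\sim_{\mathbb{Q},\mathcal{X}}\varepsilon\sum_i\mathcal{E}_i$. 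Running a relative $(K_{\mathcal{W}}+\mathcal{D})$-MMP over $\mathcal{X}$, in family over $T^{\circ}$, terminates and contracts precisely all of the $\mathcal{E}_i$'s: only $f$-exceptional divisors are $K_{\mathcal{W}}+\mathcal{D}$-negative over $\mathcal{X}$, and by the negativity lemma an exceptional effective divisor that survives and is nef over $\mathcal{X}$ must be zero. The output is a family $\mathcal{Y}\to T^{\circ}$ of $\mathbb{Q}$-factorial klt varieties together with a small birational morphism $\mathcal{Y}\to\mathcal{X}$, which yields (1).

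The family $\{\mathcal{Y}_t\}_{t\in T^{\circ}}$ is then a bounded family of $\mathbb{Q}$-factorial klt varieties by construction. By the invariance of log discrepancies in families and the finiteness of the Cartier index of the canonical divisor in a bounded family of klt varieties (both following from spreading out and constructibility applied to the generic fiber), after a final shrinking of $T^{\circ}$ there is a uniform positive integer $m$ such that $mK_{\mathcal{Y}_t}$ is Cartier and $\mathcal{Y}_t$ is $\tfrac{1}{m}$-lc for every $t\in T^{\circ}$, giving (2) and (3). The principal obstacle is carrying out the MMP \emph{in family} and verifying that the fiberwise output really is the small $\mathbb{Q}$-factorialization of $\mathcal{X}_t$; this relies on a relative MMP in families along the lines of the results referenced in the introduction. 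The other ingredients—simultaneous boundary, simultaneous log resolution, constancy of discrepancies on a dense open, and uniform bounds on a bounded family of $\mathbb{Q}$-factorial klt varieties—are standard constructibility and spreading-out arguments.
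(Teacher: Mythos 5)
Your plan is in the same spirit as the paper's (realize $\mathcal{P}$ as a family, take a simultaneous log resolution, run an MMP whose fibers give small $\mathbb{Q}$-factorial models, then spread out), but there are two genuine gaps, only one of which you acknowledge. The more serious one is the construction of the boundary $\mathcal{B}$ on $\mathcal{X}|_{T^{\circ}}$. You assert that ``the generic fiber is of klt type,'' then spread out a klt boundary and ``invoke openness of klt-ness in families.'' But a dense set of closed fibers being of klt type does not obviously imply the generic fiber is of klt type; indeed, whether being of klt type is open in a flat family is raised in this very paper as an open problem (and being klt is pointed out not to be open in general). The paper sidesteps this entirely: it never equips $\mathcal{X}$ with a boundary. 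Instead it works with $K_{\mathcal{W}}+\mathcal{E}$ (with $\mathcal{E}$ the reduced exceptional divisor of the log resolution), and the fiberwise existence of good minimal models of $(\mathcal{W}_s,\mathcal{E}_s)$ over $\mathcal{X}_s$ (Lemma~\ref{lem: gmm=q-factoril}) uses only that the individual members $X\in\mathcal{P}$ are of klt type, not the generic fiber.

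The second gap, which you flag as ``the principal obstacle,'' is the passage from an MMP on the total space to the statement that each fiber $\mathcal{Y}_t\to\mathcal{X}_t$ is small. That $\mathcal{Y}\to\mathcal{X}$ is small does not imply that the restriction $\mathcal{Y}_t\to\mathcal{X}_t$ is small without further work, and you do not supply it. The paper resolves this by adding a sufficiently ample $\mathcal{H}\sim_{\mathbb{Q}}(2\dim\mathcal{X}+1)g^*\mathcal{A}$, running the $(K_{\mathcal{W}}+\mathcal{E}+\mathcal{H})$-MMP over $S$ so that [HMX18, Theorem~1.2] applies and the Noetherian shrinking can be done at the level of semi-ample models over $S$, and then using the length-of-extremal-rays comparison (Lemma~\ref{lem: mmp=relative mmp}) to identify this with the relative MMP over $\mathcal{X}$, finishing with Lemma~\ref{lem: semiample model is small} to get smallness fiberwise. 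Nothing in your proposal plays the role of these steps. A minor inaccuracy: the paper explicitly notes that although $\mathcal{Y}$ is $\mathbb{Q}$-factorial, the fibers $\mathcal{Y}_s$ need not be; your description of $\mathcal{Y}\to T^{\circ}$ as ``a family of $\mathbb{Q}$-factorial klt varieties'' overstates this, though fortunately the Lemma does not require $\mathbb{Q}$-factoriality of $Y$.
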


In applications, by Lemma \ref{lem: Q-factorial bounded}, we are able to replace a bounded family of varieties of klt type  by a bounded family of $\Qq$-Gorenstein
klt varieties. 
Note that one can further require that $Y$ is $\Qq$-factorial in Lemma~\ref{lem: Q-factorial bounded} by boundedness of crepant models \cite[Theorem~1.2 and Page 4]{Bir22}, but we will not use this fact in this paper.
Such step appears in proofs of many birational geometry results, see for example, \cite[Lemma 2.24]{Bir19}, \cite[Proposition 10.2]{Bir23}.

\begin{cor}\label{cor: bounded family epsilon-lc}
Let $\Ii\subseteq [0,1]$ be a finite set and let $\mathcal{P}$ be a bounded family of projective varieties. Then there exists a positive real number $\epsilon$ depending only on $\Ii$ and $\mathcal{P}$ such that for any klt pair $(X,B)$ where $X\in \mathcal{P}$ and $B\in \Ii$, $(X,B)$
is $\epsilon$-lc.
\end{cor}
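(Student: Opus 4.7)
The plan is to combine Lemma~\ref{lem: Q-factorial bounded} with Theorem~\ref{thm:boundedness of global index} applied to the auxiliary family $\mathcal{P}_\mathbb{Q}$, and then extract the bound from a short integrality computation for log discrepancies.

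Given a klt pair $(X,B)$ with $X \in \mathcal{P}$ and coefficients in $\Ii$, I would first apply Lemma~\ref{lem: Q-factorial bounded} to obtain a small birational morphism $\pi : Y \to X$ with $Y \in \mathcal{P}_\mathbb{Q}$, $mK_Y$ Cartier, and $Y$ being $\tfrac{1}{m}$-lc. Since $\pi$ is small and contracts no divisor, $\pi^*(K_X + B) = K_Y + B_Y$ where $B_Y := \pi^{-1}_* B$; in particular $(Y, B_Y)$ is klt with the same log discrepancies as $(X,B)$, so it suffices to find a uniform $\epsilon > 0$ for which every such $(Y, B_Y)$ is $\epsilon$-lc.

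Next, I would invoke Theorem~\ref{thm:boundedness of global index} on the bounded family $\mathcal{P}_\mathbb{Q}$ (each of whose members is $\tfrac{1}{m}$-lc and hence of klt type) to obtain a positive integer $N$ such that every Weil $\mathbb{Q}$-Cartier divisor on any $Y \in \mathcal{P}_\mathbb{Q}$ has Cartier index dividing $N$. Assuming $\Ii \subseteq \mathbb{Q}$, the divisor $B_Y = (K_Y + B_Y) - K_Y$ is a difference of two $\mathbb{Q}$-Cartier divisors on $Y$, hence itself $\mathbb{Q}$-Cartier; thus $NB_Y$ is Cartier. For any log resolution $\mu : W \to Y$ and any prime divisor $E$ on $W$, the coefficient of $E$ in $\mu^* K_Y$ lies in $\tfrac{1}{m}\mathbb{Z}$ (since $m\,\mu^* K_Y = \mu^*(m K_Y)$ is an integral Cartier divisor on $W$) and the coefficient of $E$ in $\mu^* B_Y$ lies in $\tfrac{1}{N}\mathbb{Z}$; hence
\[
a_E(Y, B_Y) \;=\; 1 + \operatorname{coeff}_E\!\bigl(K_W - \mu^* K_Y - \mu^* B_Y\bigr) \;\in\; \tfrac{1}{mN}\mathbb{Z}.
\]
Since $(Y, B_Y)$ is klt, $a_E(Y,B_Y) > 0$, which forces $a_E(Y,B_Y) \geq \tfrac{1}{mN}$. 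As every divisor over $Y$ appears on some log resolution, $(Y, B_Y)$ is $\tfrac{1}{mN}$-lc, and we may take $\epsilon = \tfrac{1}{mN}$.

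The only substantive step is invoking Theorem~\ref{thm:boundedness of global index} to obtain the uniform Cartier index $N$ for $\mathcal{P}_\mathbb{Q}$; after that, the argument is a short integrality calculation and I do not anticipate a serious obstacle. The mild subtlety of possibly irrational coefficients in $\Ii$ would be addressed by decomposing $B_Y$ along a $\mathbb{Q}$-basis of the $\mathbb{Q}$-span of $\Ii \cup \{1\}$, where $\mathbb{Q}$-linear independence forces each resulting component to be $\mathbb{Q}$-Cartier, reducing matters to the rational case above.
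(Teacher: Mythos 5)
Your argument for rational $\Gamma$ is correct, though the detour through $\mathcal{P}_{\mathbb{Q}}$ is unnecessary: since $(X,B)$ is klt with rational $B$, the divisor $K_X+B$ is itself a Weil $\mathbb{Q}$-Cartier divisor on $X$, so one can apply Theorem~\ref{thm:boundedness of global index} directly to $\mathcal{P}$ to get $N(K_X+B)$ Cartier, and then all log discrepancies lie in $\tfrac{1}{N}\mathbb{Z}$, giving $\tfrac{1}{N}$-lc-ness in one step. This is exactly what the paper does once it has reduced to rational coefficients.

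The real gap is in your treatment of irrational coefficients. Decomposing $B_Y=\sum_{l\ge 0} e_l B_Y^{(l)}$ along a $\mathbb{Q}$-basis $\{1,e_1,\dots,e_r\}$ gives a $\mathbb{Q}$-\emph{linear} decomposition, not a convex one. Even granting that each $B_Y^{(l)}$ is $\mathbb{Q}$-Cartier with bounded Cartier index, the log discrepancy
\[
a_E(Y,B_Y)\;=\;a_E(Y,0)\;-\;\operatorname{coeff}_E\bigl(\mu^*B_Y^{(0)}\bigr)\;-\;\sum_{l\ge 1}e_l\,\operatorname{coeff}_E\bigl(\mu^*B_Y^{(l)}\bigr)
\]
is a $\mathbb{Q}$-linear combination of $1,e_1,\dots,e_r$ with possibly large denominators and unbounded numerators: the multiplicities $\operatorname{coeff}_E(\mu^*B_Y^{(l)})$ are unbounded as $E$ ranges over all divisors over $Y$, and they need not be nonnegative. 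Positivity of such a real number gives no lower bound. This is not ``reducing to the rational case.'' The paper avoids this by invoking \cite[Lemma~5.3 and Theorem~5.6]{HLS19} to produce a \emph{convex} combination $B=\sum_i a_i B_i$ with $a_i>0$, $\sum a_i=1$, each $B_i$ rational with coefficients in a fixed finite set $\Gamma_0$, and, crucially, each $(X,B_i)$ klt. Because log discrepancies are affine in the boundary, $a_E(X,B)=\sum_i a_i\,a_E(X,B_i)\ge\min_i a_E(X,B_i)\ge\tfrac{1}{N}$. Producing klt rational approximations with the same support lying in a uniform rational polytope is the substantive input you are missing; it does not follow from $\mathbb{Q}$-linear independence of the basis alone.
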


\noindent\textbf{Acknowledgement}. The authors would like to thank Caucher Birkar, Lingyao Xie, and Minzhe Zhu for helpful comments and fruitful discussions. The first author was supported by NSFC for Excellent Young Scientists (\#12322102). The second author was supported by NSFC for Innovative Research Groups (\#12121001). The authors were supported by the National Key Research and Development Program of China (\#2023YFA1010600, \#2020YFA0713200), and are members of LMNS, Fudan University.

%\section{Preliminaries}
\section{Minimal model prgram}

We adopt basic definitions as in \cite{KM98, BCHM10, HX13} about the minimal model program. 
\begin{defn}
    Let $X$ be a variety. A (small) {\it $\mathbb{Q}$-factorialization} of $X$ is a variety $Y$ with a small proper birational map $Y\to X$ such that $Y$ is $\mathbb{Q}$-factorial.
\end{defn}

\begin{defn}
 A collection $\mathcal{P}$ of projective varieties is
said to be \emph{bounded} 
if there exists a projective morphism 
$\mathcal{X}\rightarrow S$
between schemes of finite type such that
each $X\in \mathcal{P}$ is isomorphic  to $\mathcal{X}_s$ 
for some closed point $s\in S$, where $\mathcal{X}_s$ is the fiber of $\mathcal{X}$ over $s$.
\end{defn}

%\section{Minimal model program}

 \begin{lem}\label{lem: semiample model is small}
     Let $X$ be a variety of klt type. Let $f: Y\to X$ be a proper birational morphism and let $E_Y$ be the sum of all $f$-exceptional prime divisors on $Y$. 
     Suppose that $K_Y+E_Y$ is nef over $X$, then $E_Y=0$ and in particular, $f$ is small. 
 \end{lem}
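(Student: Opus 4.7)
The plan is to use the discrepancy formula together with the negativity lemma. Fix an effective $\mathbb{R}$-divisor $B$ on $X$ with $(X,B)$ klt, which exists since $X$ is of klt type. Let $E_1,\dots,E_r$ be the $f$-exceptional prime divisors, so $E_Y=\sum_i E_i$, and let $B_Y$ denote the strict transform of $B$ on $Y$. Writing out the discrepancy formula gives
\[
K_Y+B_Y = f^*(K_X+B)+\sum_i a_i E_i,
\]
where $a_i=a(E_i;X,B)>-1$ for every $i$ by the klt assumption.

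Rearranging so that $K_Y+E_Y$ appears on one side yields
\[
K_Y+E_Y - f^*(K_X+B) \;=\; \sum_i (a_i+1)E_i - B_Y.
\]
Since $K_Y+E_Y$ is $f$-nef by hypothesis and $f^*(K_X+B)$ is $f$-numerically trivial, the left-hand side is $f$-nef. Push forward the right-hand side: because each $E_i$ is $f$-exceptional, one obtains $-B\le 0$, equivalently $-f_*\bigl(\sum_i(a_i+1)E_i-B_Y\bigr)=B\ge 0$.

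Now apply the negativity lemma to $D:=B_Y-\sum_i(a_i+1)E_i$: since $-D$ is $f$-nef and $f_*D=B\ge 0$, we conclude $D\ge 0$, i.e.\ $B_Y\ge \sum_i(a_i+1)E_i$. Comparing coefficients at any exceptional prime $E_j$ (where $B_Y$ has coefficient $0$) forces $a_j+1\le 0$, contradicting $a_j>-1$. Therefore no $E_j$ exists, i.e.\ $E_Y=0$, and hence $f$ is small.

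The only mild subtlety is bookkeeping in step two (making sure we are using the convention where $a_i$ can be read as a discrepancy for the pair with boundary $B$, not just for $X$), and verifying the precise form of the negativity lemma; no serious obstacle is expected, since the klt hypothesis gives the strict inequality $a_i+1>0$ needed to derive the contradiction from a single coefficient comparison.
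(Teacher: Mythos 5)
Your argument is essentially identical to the paper's: both set $G=K_Y+E_Y-f^*(K_X+B)$ (your $D=-G$), observe that $G$ is $f$-nef with $f_*(-G)=B\ge 0$, invoke the negativity lemma to conclude $-G\ge 0$, and then derive a contradiction from the coefficient $a_j+1>0$ of any exceptional $E_j$ in $G$. The only difference is notational bookkeeping; the reasoning is the same.
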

\begin{proof}
Let $(X, B)$ be a klt pair.
    Write $G=K_Y+E_Y-f^*(K_X+B)$, then $G$ is nef over $X$ and $-f_*G=B$ is effective. Then by the negativity lemma, $-G$ is effective. Note that every prime divisor of $E_Y$ has positive coefficient in $G$ as $(X, B)$ is klt, so $E_Y=0$. 
\end{proof}

 \begin{lem}\label{lem: gmm=q-factoril}
     Let $X$ be a variety of klt type. Let $g: W\to X$ be a log resolution and let $E$ be the sum of all $g$-exceptional prime divisors on $W$. 
Then there exists a good minimal model of $(W,E)$ over $X$, which is a $\mathbb{Q}$-factorialization of $X$. 
 \end{lem}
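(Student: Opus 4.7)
The plan is to run an MMP over $X$ on a suitable klt perturbation of $(W,E)$ and then apply the Negativity Lemma to force every $g$-exceptional divisor to be contracted. Since $X$ is of klt type, fix an effective $\mathbb{R}$-divisor $B$ with $(X,B)$ klt, and write $a_i := a(E_i; X, B) > -1$ for the discrepancy of $(X,B)$ along each $g$-exceptional prime $E_i$. Pick a rational number $\epsilon$ with $0 < \epsilon < \min_i(1 + a_i)$, and set
\[
\Delta \;:=\; g^{-1}_* B \;+\; (1-\epsilon)\, E.
\]
The pair $(W,\Delta)$ is log smooth with all coefficients in $[0,1)$, so it is klt; a direct discrepancy comparison at each $E_i$ yields
\[
K_W + \Delta \;\sim_{\Qq, X}\; F \;:=\; \sum_i (1 + a_i - \epsilon)\, E_i,
\]
which is an effective, $g$-exceptional $\Qq$-divisor.

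Next I would run a $(K_W + \Delta)$-MMP over $X$ with scaling of an ample divisor. Since $(W,\Delta)$ is klt and $K_W + \Delta$ admits an effective representative over $X$, \cite{BCHM10} ensures that this MMP terminates with a good minimal model $\phi \colon W \dashrightarrow Y$ over $X$. Let $h \colon Y \to X$ be the induced morphism, and set $F_Y := \phi_* F$ and $\Delta_Y := \phi_* \Delta$. The $\Qq$-linear equivalence descends to $K_Y + \Delta_Y \sim_{\Qq, X} F_Y$; since $K_Y + \Delta_Y$ is nef over $X$, so is $F_Y$. On the other hand, $F_Y$ is effective and $h$-exceptional---each component is the strict transform of some $E_i$, whose image in $X$ has codimension at least two---so the Negativity Lemma forces $F_Y = 0$. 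Because every coefficient $1 + a_i - \epsilon$ is strictly positive, $\phi$ must contract each $E_i$, and therefore $h$ is small. Since $Y$ is $\Qq$-factorial as the target of an MMP starting from the smooth $W$, this realizes $Y$ as a $\Qq$-factorialization of $X$. Moreover, $\phi$ is a birational contraction whose exceptional set is exactly $\{E_i\}$, $E_Y = \phi_* E = 0$, and the negativity condition $a(E_i; W, E) < a(E_i; Y, 0)$ holds since $(Y,0)$ is klt; together these identify $\phi$ with a good minimal model of $(W, E)$ over $X$.

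The main obstacle is securing termination of this MMP and the existence of a good minimal model; this rests on \cite{BCHM10} applied to the klt pair $(W,\Delta)$ whose log canonical class has an effective representative over $X$. After that step, every remaining assertion---contraction of all $g$-exceptional divisors, smallness of $h$, and $\Qq$-factoriality of $Y$---follows from the one-line Negativity Lemma argument above together with the standard preservation of $\Qq$-factoriality under MMP steps.
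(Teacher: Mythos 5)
Your construction does produce a $\Qq$-factorialization of $X$, but the proof does not establish the crucial assertion that $Y$ is a \emph{good minimal model of $(W,E)$} over $X$, and the choice of perturbation is precisely what breaks the argument. Writing $K_W+B_W=g^*(K_X+B)$, the paper perturbs to $\epsilon B_W+(1-\epsilon)E$, so that
\[
K_W+\epsilon B_W+(1-\epsilon)E=\epsilon\,g^*(K_X+B)+(1-\epsilon)(K_W+E)\sim_{\Rr,X}(1-\epsilon)(K_W+E).
\]
Because the two log canonical classes are $\Rr$-linearly proportional over $X$, a good minimal model of the perturbed pair is automatically one of $(W,E)$ over $X$. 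Your $\Delta=g^{-1}_*B+(1-\epsilon)E$ does not have this property: $K_W+\Delta-(1-\epsilon)(K_W+E)=\epsilon K_W+g^{-1}_*B$, which is not a pullback from $X$, so $K_W+\Delta$ is not $\Rr$-linearly proportional to $K_W+E$ over $X$. Consequently, running the $(K_W+\Delta)$-MMP is not running a $(K_W+E)$-MMP, and the terminal model is not a priori a minimal model of $(W,E)$.

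Concretely, the condition you omit is the nefness (indeed semi-ampleness) of $K_Y+E_Y$ over $X$. In your setup $E_Y=\phi_*E=0$, so one needs $K_Y$ nef over $X$. What your MMP gives is that $K_Y+\Delta_Y$ is nef over $X$, and since $F_Y=0$ this amounts to $K_Y+\Delta_Y\sim_{\Qq,X}0$, i.e.\ $K_Y\sim_{\Qq,X}-\phi_*g^{-1}_*B$. There is no reason for $-\phi_*g^{-1}_*B$ to be nef over $X$ when $X$ is not $\Qq$-Gorenstein: curves contracted by $h\colon Y\to X$ can intersect the strict transform of $B$ positively. So your MMP may land in the ``wrong'' Mori chamber among the small $\Qq$-factorializations of $X$, producing a $Y$ with $K_Y$ not nef over $X$. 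The three conditions you do verify (birational contraction, $\phi_*E=0$, discrepancy inequalities) are not sufficient for a minimal model; the nefness condition is independent. This gap matters for the paper: in the proof of Lemma~\ref{lem: Q-factorial bounded}, the existence of a good minimal model of $(\mathcal{W}_s,\mathcal{E}_s)$ over $\mathcal{X}_s$ is fed into Lemma~\ref{lem: mmp=relative mmp} and then \cite[Theorem~1.2]{HMX18}, so the $\Qq$-factorialization statement alone is not enough. The fix is exactly the paper's choice of perturbation (any $\Delta$ with $K_W+\Delta\sim_{\Rr,X}c(K_W+E)$ for some $c>0$), after which your negativity argument carries through and gives both conclusions at once.
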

\begin{proof}
Let $(X, B)$ be a klt pair. 
Write $K_W+B_W=g^{*}(K_X+B)$. Then $(W,\epsilon B_W+(1-\epsilon) E)$ is klt for some $0<\epsilon\ll 1$.
%and $K_W+\epsilon B_W+(1-\epsilon) E\ge g^{*}(K_X+\epsilon B)$. 
By \cite[Theorem 1.2]{BCHM10}, there is a good minimal model $Y$ of $K_W+\epsilon B_W+(1-\epsilon) E$ over $X$. %, such that $K_Y+\epsilon B_W+(1-\epsilon) E$
%which is a $\mathbb{Q}$-factorialization of $X$.
Since $K_W+\epsilon B_W+(1-\epsilon) E\sim_{\Rr,X} (1-\epsilon)(K_W+E)$, $Y$ is also a good minimal model of $(W,E)$ over $X$. In particular, $Y$ is $\Qq$-factorial and  $K_Y+E_Y$ is nef over $X$ where $E_Y$ is the strict transform of $E$ on $Y$ whose support consists of all exceptional divisors on $Y$ over $X$. Then by Lemma~\ref{lem: semiample model is small}, $Y\to X$ is small.
 This implies that $Y$ is a $\mathbb{Q}$-factorialization of $X$.
\end{proof}
%This follows from \cite[Corollary~1.4.3]{BCHM10}.\end{proof}
%The proof of this lemma is standard. See for example 

 \begin{lem}\label{lem: mmp=relative mmp}
     Let $X\to S$ be a projective morphism and let $g: W\to X$ be a log resolution. Let $A$ be a  Cartier divisor on $X$ ample over $S$. Let $B$ be an effective $\mathbb{R}$-divisor on $W$ such that $(W, B)$ is lc. 
Then for any $t>2\dim X$,
a $(K_W+B+tg^*A)$-MMP over $S$ is the same as a $(K_W+B)$-MMP over $X$. In particular, there exists a good minimal models of $(W, B+tg^*A)$ over $S$ if and only if there exists a good minimal model of $(W, B)$ over $X$.
 \end{lem}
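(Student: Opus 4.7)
The plan is to compare the relative Mori cones $\overline{NE}(W/X) \subseteq \overline{NE}(W/S)$. Because $A$ is ample over $S$, for every curve $C \subseteq W$ we have $g^{*}A \cdot C = A\cdot g_{*}C$, which vanishes exactly when $C$ is contracted by $g$. Hence $\overline{NE}(W/X)$ is the face of $\overline{NE}(W/S)$ cut out by the nef Cartier class $g^{*}A$, and its extremal rays are also extremal in $\overline{NE}(W/S)$.

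The heart of the proof is to show that, for $t > 2\dim X$, every $(K_W+B+tg^{*}A)$-negative extremal ray $R$ of $\overline{NE}(W/S)$ lies in $\overline{NE}(W/X)$. Since $g^{*}A$ is nef, the hypothesis forces $(K_W+B)\cdot R < 0$, so by the Ambro--Fujino cone theorem for the lc pair $(W,B)$, $R$ is spanned by a rational curve $C$ with $-(K_W+B)\cdot C \le 2\dim W = 2\dim X$. If $g^{*}A\cdot C \neq 0$, then $g^{*}A\cdot C \ge 1$ (as $g^{*}A$ is Cartier and $C$ is integral), so $(K_W+B+tg^{*}A)\cdot C \ge -2\dim X + t > 0$, a contradiction. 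Thus $R \subseteq \overline{NE}(W/X)$, with $(K_W+B+tg^{*}A)\cdot R = (K_W+B)\cdot R$. Conversely, any $(K_W+B)$-negative extremal ray in $\overline{NE}(W/X)$ is extremal in $\overline{NE}(W/S)$ and remains $(K_W+B+tg^{*}A)$-negative since $g^{*}A\cdot R = 0$. Because extremal contractions and flips depend only on the extremal ray, the two MMPs perform identical steps.

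For the ``in particular'' statement, the same length-bound argument applied to the model at each MMP stage shows that $K_Y+B_Y$ is nef over $X$ if and only if $K_Y+B_Y+tg_Y^{*}A$ is nef over $S$, so minimal models correspond on the two sides. For the good property, note that $g_Y^{*}A$ is pulled back from $X$, so $K_Y+B_Y$ and $K_Y+B_Y+tg_Y^{*}A$ differ by a pullback from $X$; consequently, semiampleness over $X$ of one is equivalent to semiampleness over $X$ of the other. Moreover, semiampleness over $S$ always implies semiampleness over $X$, since $Y\to S$ factors through $Y\to X$ and relative global sections over $S$ restrict to relative global sections over $X$. These two observations handle one direction. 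For the other direction, if $K_Y+B_Y$ is semiample over $X$, I would pass to the ample model $h\colon Y\to Z$ over $X$ with $K_Y+B_Y = h^{*}H$ and $H$ ample over $X$; then $K_Y+B_Y+tg_Y^{*}A = h^{*}(H+tp^{*}A)$ where $p\colon Z\to X$, and a parallel length-bound argument on an lc pair descending to $Z$ shows $H+tp^{*}A$ is ample over $S$ for $t > 2\dim X$, yielding semiampleness of $K_Y+B_Y+tg_Y^{*}A$ over $S$.

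The main obstacle, and the reason the threshold $t > 2\dim X$ appears, is the cone-theoretic length bound: every comparison of negativity, nefness, or ampleness across the two bases relies on $-(K+B)\cdot C \le 2\dim X$, and the chosen $t$ is precisely what this bound demands.
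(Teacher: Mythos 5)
Your proof of the main equivalence (that the two MMPs perform the same steps) is essentially the paper's argument: both invoke the length of extremal rays for lc pairs to bound $-(K_W+B)\cdot C\le 2\dim W$, then use that $g^*A$ is Cartier and nef over $S$ to deduce $g^*A\cdot C=0$ for any $(K_W+B+tg^*A)$-negative extremal ray when $t>2\dim X$, so the ray lies in $\overline{NE}(W/X)$. One small technical point you omit and the paper handles: to make the $(K_W+B+tg^*A)$-MMP well-defined one should replace $tg^*A$ by a general effective $H\sim_{\mathbb{R},S} tg^*A$ so that $(W,B+H)$ is lc.

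For the ``in particular'' clause the paper simply cites \cite[Corollary~2.9]{HX13}, whereas you attempt a self-contained argument. The easy direction (semiample over $S$ implies semiample over $X$ implies $K_Y+B_Y$ semiample over $X$) is fine. However, in the other direction your claim that ``a parallel length-bound argument on an lc pair descending to $Z$ shows $H+tp^*A$ is ample over $S$'' is not quite complete. The cone/length argument applied to $(Z,B_Z)$ with $K_Z+B_Z=H$ directly shows only that $H+tp^*A$ is \emph{nef} over $S$: there are no $(K_Z+B_Z+tp^*A)$-negative extremal rays over $S$. Nefness of a divisor is weaker than ampleness, and nefness alone does not give semiampleness of $h^*(H+tp^*A)$. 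To get ampleness you need an extra step, for instance: pick $2\dim X<t'<t$ and observe $H+t'p^*A$ is also nef over $S$ by the same argument; if some nonzero $\gamma\in\overline{NE}(Z/S)$ had $(H+tp^*A)\cdot\gamma=0$, then $p^*A\cdot\gamma>0$ (since $p^*A\cdot\gamma=0$ would force $\gamma\in\overline{NE}(Z/X)$ and $H\cdot\gamma>0$), whence $(H+t'p^*A)\cdot\gamma=-(t-t')p^*A\cdot\gamma<0$, a contradiction; then conclude by Kleiman's criterion. So the route is workable, but as written the key inference from nef to ample is a genuine gap.
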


\begin{proof}
Note that we may choose a general effective $\mathbb{R}$-divisor $H\sim_{\mathbb{R}, S} tg^*A$ such that $(W, B+H)$ is lc. Hence a $(K_W+B+tg^*A)$-MMP over $S$ makes sense. 

  It is clear that any $(K_W+B)$-MMP over $X$ is a $(K_W+B+tg^*A)$-MMP over $S$ as it is $(g^*A)$-trivial. 
    
Conversely, it suffices to show that 
any $(K_W+B+tg^*A)$-MMP over $S$  is $(g^*A)$-trivial.
Let $R$ be a $(K_W+B+H)$-negative extremal ray over $S$, then it is also a $(K_W+B)$-negative extremal ray as $H$ is nef over $S$. 
By the length of extremal rays \cite[Theorem~18.2]{Fuj11}, $R$ is generated by a rational curve $C$ such that $-(K_W+B)\cdot C\leq 2\dim X$. This implies that $H\cdot C< -(K_W+B)\cdot C\leq 2\dim X<t$.  So $C$ is contracted by $g$ as $H\sim_{\mathbb{R}, S} tg^*A$. In particular, every 
 $(K_W+B+H)$-negative extremal ray over $S$ is a 
$(K_W+B)$-negative extremal ray over $X$. Hence 
    a $(K_W+B+tg^*A)$-MMP over $S$ is the same as a relative $(K_W+B)$-MMP on $W$ over $X$.
    
The last sentence follows directly from \cite[Corollary~2.9]{HX13}.
\end{proof}

\section{Total Cartier index}

  \begin{lem}\label{lem: tci=q-factorial}
 Let $X$ be a projective variety of klt type and let $f: X'\to X$ be a small proper birational map. 
 Then the total Cartier index of $X$ is no greater than the total Cartier index of $X'$.
 %there exists a positive integer $N_1$ such that for any Weil divisor $D'$ on $X'$, $N_1D'$ is Cartier. Then for any $X\in \mathcal{P}$ and any Weil $\mathbb{Q}$-Cartier divisor $D$ on $X$, $ND$ is Cartier. 
 \end{lem}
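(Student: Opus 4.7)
The plan is the following. Let $k$ denote the total Cartier index of $X'$, and take an arbitrary Weil $\Qq$-Cartier divisor $D$ on $X$; the goal is to show that $kD$ is Cartier on $X$. Since $f$ is small, the strict transform induces an isomorphism $\mathrm{Cl}(X)\cong\mathrm{Cl}(X')$ sending the subgroup $\Pic(X)$ into $\Pic(X')$, and under this identification the pullback $D'=f^{*}D$ corresponds to $D$ itself. In particular $D'$ is $\Qq$-Cartier on $X'$, so by the definition of $k$ we have $kD=kD'\in\Pic(X')$. Moreover, since $D$ is $\Qq$-Cartier on $X$, some positive integer multiple $mD$ lies in $\Pic(X)$; then $m\cdot(kD)=k(mD)\in\Pic(X)$, so the class of $kD$ in the quotient $\Pic(X')/\Pic(X)$ is killed by $m$, i.e.\ it is torsion.

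The lemma therefore reduces to showing that $\Pic(X')/\Pic(X)$ is torsion-free whenever $f\colon X'\to X$ is small proper birational and $X$ is of klt type. Once this is granted, the torsion class $[kD]$ must be $0$, giving $kD\in\Pic(X)$ as required. It is worth noting the equivalent formulation: if $L'$ is a line bundle on $X'$ such that $(L')^{\otimes n}$ is pulled back from $X$ for some $n>0$, then $L'$ itself is pulled back from $X$; this is the statement we need to establish.

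The hard part will be establishing this torsion-freeness. I plan to exploit the klt-type hypothesis through the vanishing $R^{i}f_{*}\OO_{X'}=0$ for $i>0$ (which holds because $X$ has rational singularities and $f$ is small, making $X'$ itself of klt type by pulling back a boundary), combined with a Leray-spectral-sequence argument applied to the exponential sheaf $\OO_{X'}^{*}$: the cokernel of $\Pic(X)\hookrightarrow\Pic(X')$ then injects into a section group built out of the exceptional fibers of $f$, which is torsion-free because the positive-dimensional fibers of a small morphism onto a klt base are rationally chain connected and have simply-connected dual complex in an appropriate sense. Alternatively, the torsion-freeness can be attacked via a cyclic-cover descent argument: a putative $n$-torsion class in $\Pic(X')/\Pic(X)$ would yield a nontrivial degree-$n$ cover of $X'$, which one shows descends to a corresponding cover of $X$ using $f_{*}\OO_{X'}=\OO_{X}$ together with Stein factorization, contradicting the assumption that no such root exists downstairs.
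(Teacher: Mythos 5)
Your opening reduction is correct and cleanly stated: identifying $\mathrm{Cl}(X)\cong\mathrm{Cl}(X')$ via strict transform, $kD$ lies in $\Pic(X')$ (since $D'=f^*D$ is $\Qq$-Cartier and $k$ is the total Cartier index of $X'$), while $m(kD)=k(mD)\in\Pic(X)$, so $[kD]$ is a torsion class in $\Pic(X')/\Pic(X)$. It therefore suffices to show that this quotient is torsion-free. However, you stop exactly where the actual mathematical content begins: neither of the two sketches you offer for the torsion-freeness is a proof, and both have genuine gaps.

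For the Leray/exponential-sequence route, using $f_*\OO_{X'}^*=\OO_X^*$ and $R^{1}f_*\OO_{X'}=R^2f_*\OO_{X'}=0$ (which do hold, since $X'$ is of klt type and $X$ has rational singularities), one can indeed reduce to torsion-freeness of $H^0(X,R^1f_*\OO_{X'}^*)$, and via the exponential sheaf sequence to torsion-freeness of $H^2(F,\Zz)$ for the (analytic neighborhoods of the) exceptional fibers $F$. But your justification---``rationally chain connected and simply-connected dual complex''---does not supply this: rational chain connectedness controls $\pi_1$ and $H^1$, not torsion in $H^2$, and ``simply-connected dual complex'' is not a statement about $H^2(F,\Zz)$ either. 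So the key step is simply asserted, not proved. The cyclic-cover route is worse off: an $n$-torsion class in $\Pic(X')/\Pic(X)$ is a line bundle $L'$ with $(L')^{\otimes n}\cong f^*L_X$ for some $L_X$, which is not an $n$-torsion line bundle, so the standard cyclic cover construction (which requires a trivialization of $(L')^{\otimes n}$) does not apply, and it is not clear what cover of $X'$ you intend to build or how Stein factorization would produce the desired descent.

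The paper's proof does not go through $\Pic(X')/\Pic(X)$ abstractly at all. It uses the klt hypothesis more directly: write $K_{X'}+B'=f^*(K_X+B)$, produce an effective Cartier divisor $G$ on $X'$ with $-G$ ample over $X$ (from realizing $X'$ as the blowup of a coherent ideal sheaf on $X$), and observe that $(X',B'+\epsilon G)$ is klt with $-(K_{X'}+B'+\epsilon G)$ $f$-ample, so $f$ is the contraction of a $(K+B+\epsilon G)$-negative extremal face. Then \cite[Theorem~3.7(4)]{KM98} says precisely that any Cartier divisor on $X'$ numerically trivial on the contracted curves, in particular $ND'=f^*(ND)$, is pulled back from a Cartier divisor on $X$; hence $ND$ is Cartier. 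Note that this argument also delivers your torsion-freeness as a byproduct (it shows $\Pic(X')/f^*\Pic(X)$ injects into $\bigoplus_C\Zz$ over the contracted curve classes), but the point is that the cone theorem does the descent in one stroke, whereas your proposal leaves the hard step unproved. If you want to salvage your approach, the honest path is to cite the cone theorem itself as the source of the torsion-freeness, at which point you have essentially reconstructed the paper's proof.
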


\begin{proof}
    Denote $N$ to be the total Cartier index of $X'$. Fix a Weil $\mathbb{Q}$-Cartier divisor $D$ on $X$. It suffices to show that $ND$ is Cartier. Denote $D'$ by the strict transform of $D$ on $X'$. Since $f: X'\to X$ is small, $D'=f^*D$. By assumption, $ND'$ is Cartier.
    
    By assumption, there exists a boundary $B$ such that $(X, B)$ is klt. So we may write $K_{X'}+B'=f^*(K_X+B)$ and $(X', B')$ is klt. 
Moreover, since $f$ is birational, there exists an effective Cartier divisor $G$ on $X'$ such that $-G$ is ample over $X$. 
Indeed, by \cite[Theorem~II.7.17]{GTM52}, $X'$ is the blowup of $X$ along a coherent sheaf of ideals $\mathcal{I}$, and the inverse image ideal is invertible which is of the form $\mathcal{O}_{X'}(-G)$ where $G$ is an effective Cartier divisor. Here $-G$ is ample over $X$ by \cite[Proposition~II.7.10]{GTM52}

Then there exists a suffciently small positive rational number $\epsilon$ such that 
$(X', B'+\epsilon G)$ is klt. Note that $-(K_{X'}+B'+\epsilon G)$ is ample over $X$, so $f: X'\to X$ is a contraction of a $(K_{X'}+B'+\epsilon G)$-negative extremal face as in the cone theorem \cite[Theorem~3.7]{KM98}. Recall that $D'=f^*D$ and $ND'$ is Cartier,  so $ND$ is Cartier by \cite[Theorem~3.7(4)]{KM98}. 
\end{proof}

\begin{proof}[Proof of Lemma \ref{lem: Q-factorial bounded}]
    As $\mathcal{P}$ is bounded, there is a quasi-projective scheme $\mathcal{X}$
    %, a Cartier divisor $\mathcal{A}$ on $\mathcal{Z}$ ample over $S$, 
and a projective morphism $\mathcal{X}\to S$, where $S$ is a disjoint union of finitely many varieties %and $\mathcal{C}_i$ does not contain any fiber, 
such that
for every $X\in \mathcal{P}$, there is a closed point $s \in S$  such that ${\mathcal{X}_{s}} \simeq X$. 
We may assume that the set of points $s$ corresponding to $X\in \mathcal{P}$ is dense in $S$. 
By the Noetherian induction, we may replace $S$ by a non-empty open subset, and we may assume that $S$
is a smooth affine variety.
%After decomposing $S$, we may also assume that $S$ is smooth.

Now consider a log resolution $g: \mathcal{W}\to \mathcal{X}$. Denote by $\mathcal{E}$ the sum of all $g$-exceptional prime divisors on $\mathcal{W}$. By the Noetherian induction, we may replace $S$ by a non-empty open subset and %the After decomposing $S$ into finitely many locally closed subsets, we may 
assume that for every $s\in S$, $\mathcal{W}_s$ is a log resolution of $\mathcal{X}_s$ and $\mathcal{E}_s$ is the sum of  all exceptional prime divisors on $\mathcal{W}_s$.

Fix a Cartier divisor $\mathcal{A}$ on $\mathcal{X}$ ample over $S$ and take a general effective $\mathbb{Q}$-divisor $\mathcal{H}\sim_{\mathbb{Q}}(2\dim \mathcal{X}+1)g^*\mathcal{A}$. We may assume that $(\mathcal{W}, \mathcal{E}+\mathcal{H})$ is log smooth over $S$. 

For a point $s\in S$ corresponding to $X\in \mathcal{P}$,  $(\mathcal{W}_s, \mathcal{E}_s)$ has a good minimal model over $\mathcal{X}_s$ by Lemma~\ref{lem: gmm=q-factoril}.
%By \cite[Corollary~2.9]{HX13}, we may run a $(\mathcal{W}_s, \mathcal{E}_s)$-MMP over $\mathcal{X}_s$ which terminates to a good minimal model over $\mathcal{X}_s$. 
Then $(\mathcal{W}_s, \mathcal{E}_s+\mathcal{H}_s)$ has a good minimal model by Lemma~\ref{lem: mmp=relative mmp}. 
Since the set of such $s$ is dense in $S$, by applying  \cite[Theorem~1.2]{HMX18}, %to each connected component of $S$, 
$(\mathcal{W}, \mathcal{E}+\mathcal{H})$ has a good minimal model over $S$. 
By \cite[Corollary~2.9]{HX13}, we may run a $(K_\mathcal{W}+\mathcal{E}+\mathcal{H})$-MMP  over $S$ with a scaling of an ample divisor, which terminates to a good minimal model $\mathcal{Y}$ over $S$. By the Noetherian induction, we may replace $S$ by a non-empty open subset and
 assume that for any $s\in S$,  $\mathcal{Y}_s$ is a semi-ample model of $({\mathcal{W}_s}, \mathcal{E}_s+\mathcal{H}_s)$. 
% We denote the steps of MMP by 
% \[
%\mathcal{W}=\mathcal{W}^0\dashrightarrow \mathcal{W}^1\dashrightarrow\dots \dashrightarrow \mathcal{W}^k.
% \]
% By the Noetherian induction, %after decomposing $S$ into finitely many locally closed subsets,
% By the Noetherian induction, we may replace $S$ by a non-empty open subset and
% assume that for any $s\in S$, 
 % \[
%\mathcal{W}_s=\mathcal{W}^0_{s}\dashrightarrow \mathcal{W}^1_{ s}\dashrightarrow\dots \dashrightarrow \mathcal{W}^k_{s}
% \]
% is a $(K_{\mathcal{W}_s}+\mathcal{E}_s+\mathcal{H}_s)$-MMP.
By Lemma~\ref{lem: mmp=relative mmp} again, this MMP is over $\mathcal{X}$.
Therefore, for a closed point $s\in S$ corresponding to $X\in \mathcal{P}$, %we have a sequence of $(K_{\mathcal{W}_s}+\mathcal{E}_s+\mathcal{H}_s)$-MMP
$\mathcal{Y}_{s}$ is a semi-ample model of $(\mathcal{W}_{s}, \mathcal{E}_{s})$ over $X$. 
%Hence a $\mathbb{Q}$-factorialization  
%of $\mathcal{Y}_{s}$ gives a 
%good minimal model of $(\mathcal{W}_{s}, \mathcal{E}_{s})$ over $X$, which is just a 
% $\mathbb{Q}$-factorialization of $X$ by Lemma~\ref{lem: gmm=q-factoril}. In particular, 
Then by Lemma~\ref{lem: semiample model is small}, 
 $\mathcal{Y}_{s}\to X$ is small. 
Here we remark that $\mathcal{Y}$ is $\Qq$-factorial by the construction, but  $\mathcal{Y}_{s}$ is not necessarily $\Qq$-factorial in general. 

We can just take the set of such $\mathcal{Y}_{s}$ to be $\mathcal{P}_\mathbb{Q}$. 
By construction, there exists a positive integer $m$ depending only on $\mathcal{P}$ such that $mK_{\mathcal{Y}}$ is Cartier. So $mK_{\mathcal{Y}_s}$ is Cartier.
Since $X$ is of klt type, $\mathcal{Y}_s$ is klt, and in particular, $\mathcal{Y}_s$ is $\frac{1}{m}$-lc. %Since $\mathcal{Y}_s$ is in a bounded family by our construction, $\mathcal{Y}'_s$ is also in a bounded family by boundedness of crepant models \cite[Theorem~1.2 and Page 4]{Bir22}.
%Consider $(\mathcal{Y}, )$
%Now by Hironaka's desingularization, 
%We can take a positive integer $r$ such that $r(K_{\mathcal{W}^i}+\mathcal{E}^i)$ are Cartier for every $0\leq i\leq k$, where $\mathcal{E}^i$ is the strict transform of $\mathcal{E}$ on $\mathcal{W}^i$. Note that $r$ and $k$ depends only on the bounded family $\mathcal{P}$. 
%For a point $s\in S$ corresponding to $X\in \mathcal{P}$, it is clear that the total Cartier index of $\mathcal{W}^0_{s}$ is $1$. Then by applying Lemma~\ref{lem: tci on mmp} for $k$ times on this MMP and applying Lemma~\ref{lem: tci=q-factorial} to the $\mathbb{Q}$-factorialization $\mathcal{W}^k_{s}\to X$, we get that the  total Cartier index of $X$ is no greater than $(r(2dr)!)^k$. Here $d=\dim \mathcal{X}\geq \dim X$.  
\end{proof}

\begin{proof}[Proof of Theorem \ref{thm:boundedness of global index}]
By Lemma \ref{lem: Q-factorial bounded} and Lemma \ref{lem: tci=q-factorial}, possibly replacing $\mathcal{P}$, we may assume that each $X$ is $\epsilon$-lc for some fixed $\epsilon>0$. The Theorem follows from \cite[Theorem 1.10]{HLQ23}.
\end{proof}

\begin{proof}[Proof of Corollary \ref{cor: bounded family epsilon-lc}]
By \cite[Lemma 5.3]{HLS19} and \cite[Theorem 5.6]{HLS19}, there exist positive real numbers $a_1,a_2,\ldots,a_m$ with $\sum_{i=1}^m a_i=1$, and a finite set $\Ii_0\subseteq [0,1]\cap \Qq$ satisfying following.  

For each klt pair $(X,B)$, where $X\in \mathcal{P}$, and $B\in \Ii$, there exists $\Qq$-divisors $B_i\in \Ii_0$ on $X$, such that 

\begin{itemize}
\item $B=\sum_{i=1}^m a_iB_i$, and
    \item each $(X,B_i)$ is klt.
\end{itemize}
By Theorem \ref{thm:boundedness of global index}, there exists a positive integer $N$ depending only on $\mathcal{P}$, such that $N(K_X+B_i)$ is Cartier for each $i$. In particular, $(X,B_i)$ is $\frac{1}{N}$-lc for each $i$. Thus $(X,B)$ is $\frac{1}{N}$-lc. We may let $\epsilon=\frac{1}{N}$ as required.
\end{proof}

\section{Examples and Open problems}
We collect several examples to show previous known results as well as to discuss whether the conditions in Theorem \ref{thm:boundedness of global index} are necessary. These examples should be well-known to experts.

%Theorem \ref{thm:boundedness of global index} gives unexplicit upper bounds of the total Cartier indices for the following three examples.

%In this example, we introduce several explicit values of total Cartier indices.
\begin{exmp}\label{ex1}\begin{enumerate}
  \item If $X$ is a quotient of a smooth variety by a finite group $G$, then the total Cartier index of $X$ is no greater than $|G|$. 

  \item If $X$ is an isolated hypersurface singularity of dimension at least $3$, then the total Cartier index of $X$ is $1$ by \cite[Theorem~5.2]{Milnor}. 

\item If $X$ is a terminal singularity of dimension $3$, then the total Cartier index of $X$ is the Cartier index of $K_X$ by \cite[Lemma~5.2]{Kaw88}.
\end{enumerate}
%\cite[Theorem 1.10]{HLQ23}.
\end{exmp}

\begin{exmp}
\begin{enumerate}
\item If $X$ is of klt type, then the total Cartier index of $X$ is finite by \cite[Theorem 1.10]{GKP16} or \cite[Lemma 7.14]{CH20}.
\item More generally, if $X$ has rational singularities, then by the proof of \cite[Lemma 1.12]{Kaw88}, the total Cartier index of $X$ is finite.
\end{enumerate}
\end{exmp}

\begin{exmp}
\begin{enumerate}
\item Suppose that $(X,\Supp B)$ is log bounded, $(X,B)$ is klt and the coefficients of $B$ belong to a finite set of rational numbers, then the Cartier index of $K_X+B$ is bounded from above by \cite[Lemma 3.16]{Bir22}.

\item The set of $\epsilon$-lc Fano varieties of dimension $d$ are bounded by \cite{Bir21} for any fixed positive integer $d$ and $\epsilon>0$. By \cite[Theorem 1.10]{HLQ23}, the total Cartier index of such varieties are bounded from above.

\item The total Cartier indices of terminal Fano threefolds are bounded from above by $840$ by \cite[Proposition~2.4]{CJ16} and \cite[Lemma~5.2]{Kaw88}.

%The set of terminal Fano threefolds are bounded \cite{KMMT00}. By Reid's Riemann Roch formula, the total Cartier indices of such threefolds are bounded from above by $24!$.

\end{enumerate}
\end{exmp}

  \begin{exmp}
  \begin{enumerate}
\item Let $X$ be a cone over an elliptic curve, then by \cite[Proposition 3.14]{Kol13}, $X$ is lc and not of klt type, and the total Cartier index of $X$ is $\infty$. Thus we could not relax the assumption ``of klt type'' in Theorem \ref{thm:boundedness of global index} to ``lc''.

\item  The set of all smooth varieties is not bounded. However, the total Cartier index of any smooth variety is $1$. Thus, ``$\mathcal{P}$ be a bounded family'' is not an necessary condition in Theorem \ref{thm:boundedness of global index}.
  \end{enumerate}
  \end{exmp}

We propose some open problems related to bounded families of varieties. 

\begin{Prbm}
Does Theorem \ref{thm:boundedness of global index} still hold if we relax the assumption ``of klt type'' to ``with rational singularities''?
\end{Prbm}

\begin{Prbm}
Is being of klt type an open condition in a flat family of varieties?
\end{Prbm}

We remark that being klt is not an open condition because being $\mathbb{Q}$-Gorenstein is not open by \cite[Examples 9.1.7, 9.1.8]{Ishii18}; but it is an open condition in a flat family of $\mathbb{Q}$-Gorenstein varieties by \cite[Theorem~A]{ST23}.

 Recall that a variety $X$ is said to be {\it of $\epsilon$-lc type} if there exists an effective $\mathbb{R}$-divisor $B$ such that  $(X, B)$ is $\epsilon$-lc. A projective variety $X$ is said to be {\it of $\epsilon$-Fano type} if there exists an effective $\mathbb{R}$-divisor $B$ such that  $(X, B)$ is $\epsilon$-lc and $-(K_X+B)$ is ample.

\begin{Prbm}
Let $\mathcal{P}$ be a bounded family of projective varieties of klt type. Does there exist a positive real number $\epsilon$ such that for any $X\in \mathcal{P}$, $X$ is of $\epsilon$-lc type?
\end{Prbm}

\begin{Prbm}
Let $\mathcal{P}$ be a bounded family of projective varieties of Fano type. Does there exist a positive real number $\epsilon$ such that for any $X\in \mathcal{P}$, $X$ is of $\epsilon$-Fano type?
\end{Prbm}

\begin{Prbm}[{cf. \cite[Definition~1.6, Remark~1.8]{JW24}}]
Let $\mathcal{P}$ be a bounded family of projective klt Calabi--Yau varieties. Does there exist a positive integer $N$ such that for any projective klt Calabi--Yau variety $Y$ which is birational to some $X\in \mathcal{P}$, the total Cartier index of $Y$ is bounded by $N$?  Or more generally, does there exist a positive integer $N$ such that for any projective klt Calabi--Yau variety $Y$ of a fixed dimension $d$, the total Cartier index of $Y$ is bounded by $N$?  
\end{Prbm}

\end{document}